\newtheorem{thm}{Theorem}[section]
\newtheorem{lem}[thm]{Lemma}
\newtheorem{conj}[thm]{Conjecture}
\newtheorem{defn}[thm]{Definition}
\theoremstyle{definition}
\theoremstyle{remark}
\numberwithin{equation}{section}
\begin{document}

\title{On the Length of a Partial Independent Transversal in a Matroidal Latin Square}
\author{Daniel Kotlar and Ran Ziv}
\address{Computer Science Department, Tel-Hai College, Upper Galilee 12210, Israel}%
\email{dannykot@telhai.ac.il}

\subjclass{68R05, 05B15, 05B35, 15A03}
\keywords{Latin square, matroidal Latin square, partial independent transversal}

\begin{abstract}
We suggest and explore a matroidal version of the Brualdi - Ryser conjecture about Latin squares. We prove that any $n\times n$ matrix, whose rows and Columns are bases of a matroid, has an independent partial transversal of length $\lceil2n/3\rceil$. We show that for any $n$, there exists such a matrix with a maximal independent partial transversal of length at most $n-1$.
\end{abstract}
\maketitle
\section{Introduction}\label{section1}
A Latin Square of order $n$ is an $n\times n$ array $L$ with entries taken from the set $\{1,\ldots,n\}$, where each entry appears exactly once in each row or column of $L$. A \emph{partial transversal} of size $k$ of a Latin square $L$
is a subset of $k$ different entries of $L$, where no two of them lie in the same row or column.
\\
The maximal size of a partial transversal in $L$ will be denoted here by $t(L)$ and the minimal size of $t(L)$, over all Latin squares $L$ of order $n$, will be denoted by $T(n)$.
\\
It has been conjectured by Ryser \cite{Ryser67} that $T(n) = n$ for every odd $n$ and by Brualdi \cite{Denes74} (see also \cite{BruRys} p. 255) that $T(n) = n-1$ for every even $n$. Although these conjectures are still unsettled, a consistent progress has been made towards its resolution:  Koksma \cite{Koksma69} proved that for $n\geq 3$,
 $T(n)\geq \lceil(2n+1)/3\rceil$. This bound was improved by Drake \cite{Drake77} to $T(n) \geq  \lceil3n/4\rceil$  for $n>7$, and again by de Veris and Wieringa \cite{VriesWier} who obtained a lower bound of $\lceil (4n-3)/5 \rceil$ for $n\geq 12$. Woollbright \cite{Wool78} showed that $T(n)\geq\lceil n- \sqrt{n} \rceil$. A similar result was obtained independently by Brouwer, de Vries and Wieringa \cite{Brouwer78}. Recently, Hatami and Shor \cite{HatShor08} proved that $T(n)\geq n-O(\log^2n)$. See also a recent comprehensive survey by Wanless \cite{wan11}.
\\
The aim of this note is to suggest and explore a matroidal version of the Brualdi-Ryser conjectures. For basic texts on matroids the reader is referred to Welsh \cite{Welsh76}, Oxley \cite{Oxley11} and White \cite{White86}.
\begin{defn}\label{def1}
Let $(M,S)$ be a matroid $M$ on a ground set $S$. A matroidal Latin square (abbreviated MLS) of degree $n$ over $(M,S)$ is an $n\times n$  matrix $A$ whose entries are elements of $S$, where each row or column of $A$ is a base of $M$.
\end{defn}

Notice that a matroidal Latin square reduces to a Latin square if M is a partition matroid.
We mention that according to a well-known conjecture of Rota \cite{HuangRota94} every set of $n$ bases of a matroid of rank $n$ can be arranged to form an MLS of degree $n$ so that its rows consist of the original bases.
\begin{defn}\label{def2}
An independent partial transversal of an MLS $A$ is an independent subset of entries of $A$ where no two of them lie in the same row or column of $A$.
\end{defn}
We propose the following analogue of Brualdi's conjecture:
\begin{conj}\label{conj1}
Every MLS of degree $n$ has an independent partial transversal of size $n-1$.
\end{conj}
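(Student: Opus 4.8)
Because Conjecture \ref{conj1} specializes, when $M$ is a partition matroid, to the assertion that every Latin square of order $n$ has a partial transversal of length $n-1$ (an independent set in a partition matroid is precisely a set of entries with pairwise distinct symbols), it contains as a special case the still-open weak form of the Brualdi--Ryser conjectures discussed in Section \ref{section1}. A complete proof is therefore out of reach of presently available techniques; the plan is instead to set up the natural augmentation framework that already yields the partial bounds quoted above and to try to drive it to within one of $n$, while being candid about where it must stall.

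\textbf{Reformulation.} A size-$(n-1)$ independent partial transversal is a set of $n-1$ cells of $A$ that is simultaneously independent in three matroids on the cell set $[n]\times[n]$: the row-partition matroid (at most one cell per row), the column-partition matroid (at most one cell per column), and the pullback $\phi^{*}M$ of $M$ under the entry map $\phi(i,j)=A_{ij}$. Equivalently, it is a rainbow matching of size $n-1$ in $K_{n,n}$ (rows versus columns) whose chosen entries form an independent set of $M$. General three-matroid intersection admits no clean min-max and is intractable, so any proof must exploit the hypothesis that every row and every column is a \emph{base}.

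\textbf{Augmentation.} Let $P$ be an independent partial transversal of maximum size $k$, occupying row set $R$ and column set $C$ with $\abs R=\abs C=k$, and suppose for contradiction that $k\le n-2$. Since each row is a base and $M$ has rank $n$, any row meets the rank-$k$ flat $\mathrm{span}(P)$ in at most $k$ of its (distinct, independent) entries; hence every unused row $r\notin R$ has at least $n-k\ge 2$ \emph{good} entries lying outside $\mathrm{span}(P)$. A good entry in an unused column would extend $P$, contradicting maximality, so all good entries of all unused rows lie in $C$. For a good cell $(r,c)$ with $r\notin R$, $c\in C$, let $(r_c,c)$ be the cell of $P$ in column $c$; replacing its entry $p_c=A_{r_c,c}$ by $A_{r,c}$ keeps $P$ independent (since $A_{r,c}\notin\mathrm{span}(P)\supseteq\mathrm{span}(P\setminus\set{p_c})$) and preserves the used column set $C$, while freeing row $r_c$ and activating $r$. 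Iterating these matroid-respecting column swaps produces an alternating sequence of rows and columns; the aim is to route it so that some good entry is eventually forced into one of the $n-k\ge 2$ unused columns, yielding a genuine augmentation to size $k+1$.

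\textbf{Main obstacle.} The crux is the termination and closure of this alternating process. Each swap alters $\mathrm{span}(P)$ and hence redefines which entries are good, so the potential-function and double-counting bookkeeping that powers Woolbright's argument only forces augmentation while the deficiency $n-k$ is large, stalling near $n-\sqrt n$ (and, with the far subtler Hatami--Shor counting, near $n-O(\log^{2}n)$). Reaching $k=n-1$ demands control of the final constant-deficiency steps, where only two unused rows and two unused columns remain and the counting has too little room to bite. Deficiency versions of Rado's theorem and matroid union can certify independent partial transversals of guaranteed length (viewing the columns as independent sets of $M$), but these reproduce only sublinear-deficiency lower bounds and do not reach $n-1$. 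I therefore expect that settling Conjecture \ref{conj1} in full requires either a genuinely new augmentation invariant or a matroid-specific structural dichotomy, and in any event at least a proof of the classical Brualdi--Ryser statement it contains.
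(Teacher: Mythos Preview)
Your assessment is correct: the statement is a \emph{conjecture}, and the paper does not prove it. The paper explicitly poses it as an open problem motivated by Brualdi's conjecture, and you are right that when $M$ is a partition matroid it specializes exactly to the still-open assertion that every Latin square of order $n$ has a partial transversal of length $n-1$. So any purported proof would in particular resolve that classical problem.

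What the paper actually establishes is the weaker lower bound $\lceil 2n/3\rceil$ (Theorem~\ref{thm1}), via an argument close in spirit to the augmentation framework you sketch: take a maximal independent partial transversal $T$ on the diagonal of the $t\times t$ upper-left block, observe that all entries of the lower-right $(n-t)\times(n-t)$ block $E$ lie in $\mathrm{span}(T)$, and use the base property of rows to locate at least $n-t$ ``good'' (unspanned) entries in each row of the lower-left block $D$. A combinatorial lemma about families of large subsets of a $t$-set then forces, when $t<2n/3$, a column of $D$ with two good entries lying over an element of $T$ that is essential for spanning some $x\in E$; swapping produces a longer transversal. This is precisely a one-step version of the alternating process you describe, and it stalls for exactly the reason you identify. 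Your discussion of the obstacle is accurate and matches the state of the art reflected in the paper.
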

In view of Ryser's conjecture, it is natural to ask whether in Conjecture~\ref{conj1} an independent transversal of size $n$ exists whenever $n$ is odd. Theorem~\ref{thm2} asserts that this is not the case.

\section{A lower bound for a maximal independent partial transversal}\label{section2}
 Let $A=(a_{ij})_{i,j=1}^n$ be an MLS of degree $n$ over a matroid $M$. Let $T$ be an independent partial transversal of size $t$. Without loss of generality we may assume that the elements of $T$ are the first $t$ elements of the main diagonal of $A$. That is
\begin{equation}\label{eq:submatrices}
A=\left(\begin{array}{c|c}
  B & C \\ \hline
  D & E
  \end{array} \right)
\end{equation}
where $B$, $C$, $D$ and $E$ are sub-matrices of $A$ of dimensions $t\times t$, $t\times (n-t)$, $(n-t)\times t$ and $(n-t)\times (n-t)$ respectively, and $T$ constitutes the main diagonal of $B$. If $T$ is of maximal length, then $t\geq \lceil n/2\rceil$. Otherwise $dim(E)\geq n-t>t=dim(T)$ and thus $E$ would contain an element that is not spanned by $T$ and hence can be added to $T$, contradicting the maximality of $T$. In order to show that $t\geq\lceil 2n/3\rceil$ we shall need the following lemma:
\begin{lem}\label{lem1}
Let $X$ be a finite set and let $s>|X|/2$. Let $X_1,\ldots,X_s$ be a family of subsets of $X$, each of size at least $s$. Then there exists some $X_i$, all of whose elements appear in other subsets in the family.
\end{lem}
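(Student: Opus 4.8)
The natural approach is proof by contradiction. Assume the conclusion fails, so that every $X_i$ contains an element lying in no other member of the family; fix such an element $x_i\in X_i$ and call it the \emph{private} element of $X_i$.

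First I would check that $x_1,\dots,x_s$ are pairwise distinct: if $x_i=x_j$ for some $i\neq j$, then this common element lies in both $X_i$ and $X_j$, contradicting its being private to $X_i$. Hence $P:=\{x_1,\dots,x_s\}$ has exactly $s$ elements. Next, for each $i$ I would locate $X_i$ relative to $P$: if $y\in X_i$ and $y=x_j\in P$, then $x_j$ being private to $X_j$ forces $j=i$; so $X_i\cap P=\{x_i\}$, and consequently $X_i\setminus\{x_i\}\subseteq X\setminus P$.

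Now comes the counting. Since $|X_i|\ge s$ we get $s-1\le|X_i\setminus\{x_i\}|\le|X\setminus P|=|X|-s$, hence $|X|\ge 2s-1$. Together with the hypothesis $s>|X|/2$, i.e. $|X|\le 2s-1$, this pins $|X|$ down to the single value $2s-1$, and in that case all the inequalities above are forced to be equalities: each $X_i$ has size exactly $s$, and $X_i=\{x_i\}\cup(X\setminus P)$ with $X\setminus P$ a fixed set of size $s-1$. I expect this equality case to be the main obstacle: one has to rule out this rigid configuration directly (or absorb it by a sharper bookkeeping of the private elements, noting that an $X_i$ with two or more private elements already improves the count, so the only surviving case is exactly one private element per set, which is precisely the rigid one and must be excluded by hand). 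Once the boundary case is dispatched, the contradiction is complete and the lemma follows.
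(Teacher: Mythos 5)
Your counting is correct and sharper than you give it credit for: the private elements $x_1,\dots,x_s$ are indeed pairwise distinct, $X_i\cap P=\{x_i\}$, and $s-1\le|X_i\setminus\{x_i\}|\le|X|-s$ together with $|X|\le 2s-1$ (the integer form of $s>|X|/2$) forces $|X|=2s-1$, $|X_i|=s$, and $X_i=\{x_i\}\cup(X\setminus P)$ for every $i$, with $X\setminus P$ a common block of size $s-1$. The gap is your final claim that this rigid configuration can be ``excluded by hand'': no argument is offered, and none can be, because the configuration satisfies every hypothesis of the lemma and violates its conclusion. Concretely, for $s=2$ take $X=\{a,b,c\}$, $X_1=\{a,c\}$, $X_2=\{b,c\}$: then $s>|X|/2$, both sets have size $s$, yet each $X_i$ contains an element lying in no other member of the family. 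So the lemma as stated fails when $|X|=2s-1$; what your argument actually proves is the statement under the slightly stronger hypothesis $|X|\le 2s-2$, in particular whenever $|X|$ is even.

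It is worth comparing this with the paper's own proof, which counts $Y_1$ (elements lying in exactly one member) and $Y_2$ (elements lying in at least two): in the case $|X_i\cap Y_1|=1$ it obtains $k_2\ge s-1>|X|/2-1$ and then concludes $k_2\ge|X|/2$, an inference valid only for even $|X|$; when $|X|=2s-1$ one only gets $k_2\ge(|X|-1)/2$, which is compatible with the earlier bound $k_2<|X|/2$, and your rigid configuration realizes exactly $k_1=s$, $k_2=s-1$. So your private-element bookkeeping is a genuinely different and cleaner route, and it has the merit of exposing the extremal case that the paper's argument silently passes over; but as a proof of the lemma as printed it cannot be completed, and any repair must strengthen a hypothesis (e.g. require $s>(|X|+1)/2$, or members of size at least $s+1$). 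The problematic case is not vacuous in the application either: in the proof of Theorem~\ref{thm1} it corresponds to $t=2(n-t)-1$, i.e. $n\equiv 2\pmod 3$.
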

\begin{proof}
Let $Y_1$ be the set of elements in $X$ that appear in exactly one of the subsets $X_1,\ldots,X_s$ and let $Y_2$ be the set of elements in $X$ that appear in at least two of the subsets $X_1,\ldots,X_s$. Let $k_1=|Y_1|$ and $k_2=|Y_2|$. Assume, by contradiction, that each $X_i$ contains at least one element of $Y_1$. Then $k_1\geq s$ and thus
\begin{equation}\label{eq:1}
k_2\leq |X|-k_1\leq |X|-s<|X|/2
\end{equation}
(since $s>|X|/2$). If, for some $i$, $|X_i\cap Y_1|=1$ then $|X_i\cap Y_2|\geq s-1$ and thus $k_2\geq s-1 > |X|/2-1$. It follows that $k_2\geq |X|/2$, contradicting (\ref{eq:1}).
It follows that for all $i$, $|X_i\cap Y_1|\geq2$. Then $k_1\geq 2s$ and thus $k_2\leq |X|-k_1\leq |X|-2s < |X|-|X| =0$, which is absurd. This proves the lemma.
\end{proof}
\begin{thm}\label{thm1}
Let $A$ be an MLS of degree $n$ over a matroid $M$. Then $A$ contains an independent partial transversal of size $\lceil2n/3\rceil$.
\end{thm}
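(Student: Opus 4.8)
The plan is to argue by contradiction with a \emph{maximal} independent partial transversal $T$, normalised as in \eqref{eq:submatrices} so that $T$ is the main diagonal of the $t\times t$ block $B$. We may assume $t<n$ (otherwise $T$ already contains a transversal of size $\lceil 2n/3\rceil$), and we assume for contradiction that $t<\lceil 2n/3\rceil$; then $3t<2n$, i.e.\ $n-t>t/2$, which is exactly the hypothesis of Lemma~\ref{lem1} over a ground set of size $t$. Let $W=\mathrm{cl}(T)$ be the closure of $T$ in $M$, a flat of rank $t$ with basis $T$. Maximality of $T$ forces every entry of $E$ to lie in $W$: such an entry occupies a row and a column disjoint from $T$, so if it escaped $W$ it could be adjoined to $T$. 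For each new column $l\in\{t+1,\dots,n\}$ set $U_l=\{i\le t:a_{il}\notin W\}$. Since column $l$ of $A$ is a base while its entries in rows $>t$ lie in $E\subseteq W$, the entries $a_{1l},\dots,a_{tl}$ must span $M$ modulo $W$, and entries that already lie in $W$ contribute nothing; hence $\abs{U_l}\ge n-t$. (The transposed statement holds for the $n-t$ new rows.)

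Next I split $\{1,\dots,t\}$ into $I_b$ and $I_a=\{1,\dots,t\}\setminus I_b$, where $I_b$ is the set of indices $i$ such that $a_{ii}$ lies in the fundamental circuit of \emph{some} entry $a_{kl}$ of $E$ relative to the basis $T$; equivalently, $i\in I_b$ iff $(T\setminus\{a_{ii}\})\cup\{a_{kl}\}$ is again a basis of $W$ for some $k,l>t$. The first bound is $\abs{I_b}\ge n-t$. Fix any new row $k$; its entries $R_k=\{a_{k,t+1},\dots,a_{kn}\}$ inside $E$ form an independent set of size $n-t$, being part of the base formed by row $k$ of $A$. If $i\notin I_b$ then $a_{ii}$ lies in no such fundamental circuit, so $a_{kl}\in\mathrm{cl}(T\setminus\{a_{ii}\})=\mathrm{cl}\{a_{jj}:j\ne i\}$ for every $l>t$, whence $R_k\subseteq\mathrm{cl}\{a_{jj}:j\ne i\}$. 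Intersecting these coordinate hyperplanes of $W$ over all $i\notin I_b$, and using the elementary identity $\bigcap_{i\in Z}\mathrm{cl}\{a_{jj}:j\ne i\}=\mathrm{cl}\{a_{jj}:j\notin Z\}$, we obtain $R_k\subseteq\mathrm{cl}\{a_{jj}:j\in I_b\}$, a flat of rank $\abs{I_b}$; hence $n-t=\mathrm{rank}(R_k)\le\abs{I_b}$.

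The second bound, $\abs{I_a}\ge n-t$, is where Lemma~\ref{lem1} enters. First observe that if $i\in I_b$ then $a_{il}\in W$ for all but at most one new column $l$. Indeed, suppose $a_{il_1}\notin W$ and $a_{il_2}\notin W$ with $l_1\ne l_2$, and pick $k,l>t$ with $(T\setminus\{a_{ii}\})\cup\{a_{kl}\}$ a basis of $W$; choosing $l'\in\{l_1,l_2\}$ with $l'\ne l$, the set $(T\setminus\{a_{ii}\})\cup\{a_{kl},a_{il'}\}$ is independent, because its first $t$ entries span $W$ and $a_{il'}\notin W$, and it meets the rows $\{1,\dots,t,k\}$ and the columns $(\{1,\dots,t\}\setminus\{i\})\cup\{l,l'\}$ in exactly $t+1$ cells, contradicting maximality of $T$. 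Now apply Lemma~\ref{lem1} with $X=\{1,\dots,t\}$ and the $n-t$ sets $U_{t+1},\dots,U_n$: each has size at least $n-t$ and $n-t>t/2$, so some $U_c$ has all of its elements occurring in other members of the family. Every $i\in U_c$ then lies in $U_c$ and in $U_l$ for some $l\ne c$, i.e.\ $a_{il}\notin W$ for at least two new columns, so $i\notin I_b$ by the observation; hence $U_c\subseteq I_a$ and $\abs{I_a}\ge\abs{U_c}\ge n-t$.

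Adding the two bounds gives $t=\abs{I_a}+\abs{I_b}\ge 2(n-t)$, i.e.\ $3t\ge 2n$, contradicting $t<\lceil 2n/3\rceil$; therefore $t\ge\lceil 2n/3\rceil$. I expect the main obstacle to be two intertwined points: formulating the correct ``non\-augmentability'' statement for $i\in I_b$ (exactly which single cell of $E$ may be swapped for $a_{ii}$, and which new column is then free to be adjoined), and checking the closure identity $\bigcap_{i\in Z}\mathrm{cl}\{a_{jj}:j\ne i\}=\mathrm{cl}\{a_{jj}:j\notin Z\}$, which is a routine fundamental\-circuit computation. With those in hand Lemma~\ref{lem1} carries out the combinatorial step, and the roles of rows and columns in the argument are symmetric.
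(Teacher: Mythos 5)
Your proof is correct and follows essentially the same route as the paper's: a maximal transversal $T$ with $E\subseteq\mathrm{cl}(T)$, Lemma~\ref{lem1} applied to the sets of unspanned entries in the off-diagonal block, and the same exchange step (trade a diagonal element lying in the support of an entry of $E$ for that entry together with an unspanned entry in its row/column). The only differences are cosmetic: you work with the block $C$ rather than $D$, your $\{a_{ii}:i\in I_b\}$ is the paper's $T_E$, and you reach the contradiction through the count $t=\abs{I_a}+\abs{I_b}\ge 2(n-t)$ instead of exhibiting a $(t+1)$-transversal at the end.
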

\begin{proof}
We use the notations from the beginning of Section~\ref{section2}. Since $T$ is maximal, all the elements in the sub-matrix $E$ are spanned by $T$. Let $T_E$ be the minimal subset of $T$ that spans $E$ (this set is unique since $T$ is independent.) Since $dim(E)\geq n-t$ then $|T_E|\geq n-t$ and thus $|T\setminus T_E|\leq t-(n-t)=2t-n$. Since each row of $A$ is a base and all the elements of $E$ are spanned by $T$, each row of the sub-matrix $D$ contains a subset of size $n-t$ that complement $T$ to a base. In particular, each row of $D$ contains at least $n-t$ elements that are not spanned by $T$. Let $X=\{1,\ldots,t\}$ be the set of indices of the columns of $D$. For each of the $n-t$ rows in $D$ we define a subset $X_i\subseteq X$, $i=t+1,\ldots,n$, in the following way: $j\in X_i$ if and only if the $j$th element of the $i$th row of $A$ is not spanned by $T$. It follows that $|X_i|\geq n-t$ for all $i=t+1,\ldots,n$. Now assume, by contradiction, that $t<2n/3$. Then $n-t>n/3>t/2$. So we have a set $X$ of size $t$ and $n-t$ subsets $X_{t+1},\ldots,X_n$, each of size at least $n-t$, such that $n-t>t/2$. Let $s=n-t$. By Lemma~\ref{lem1} we conclude that there exists a subset $X_i$ all of whose elements are contained in other subsets in the family $X_{t+1},\ldots,X_n$. This means that there is a row in $D$ containing at least $n-t$ elements that are not spanned by $T$ and for each such element there exists another element in the same column in $D$ that is not spanned by $T$. It follows that $D$ contains at least $n-t$ columns, each containing at least two elements that are not spanned by $T$. Since $t<2n/3$ we have that $|T\setminus T_E|\leq 2t-n<n/3<n-t$. So there exists $j\leq t$ such that (1) $a_{jj}\in T_E$ and (2) the $j$th column of $D$ contains at least two elements that are not spanned in $T$. Let $x\in E$ be such that its support (i.e., its minimal spanning set) in $T$ contains $a_{jj}$ and let $y$ and $z$ be two elements in the $j$th column of $D$ that are not spanned by $T$. We may assume that $x$ and $y$ are not in the same row (otherwise we take $z$ instead of $y$). Since $T\cup\{y\}$ is independent, and the support of $x$ in $T$ contains $a_{jj}$, it follows that $T\setminus\{a_{jj}\}\cup \{y\}$ does not span $x$ and thus $S\setminus\{a_{jj}\}\cup \{x,y\}$ is an independent partial transversal in $A$ of length $t+1$, contrary to the maximality of $T$. Thus $t$ must be at least $\lceil2n/3\rceil$.
\end{proof}
\section{An upper bound of size $n-1$ for an MLS of degree $n$}\label{section3}
It is well known that for any even $n$ there exist Latin squares of order $n$ with no transversal of size $n$. The following theorem shows that for any $n$ there exists an MLS of degree $n$ with no independent transversal of size $n$.
\begin{thm}\label{thm2}
Let $v_1,v_2,\dots,v_n$ be a basis of a vectorial matroid of rank $n$. Then the matrix $A=(a_{ij})_{i,j=1}^n$, whose elements are $a_{ii}=v_1$, for $i=1,\ldots,n$, and $a_{ij}=v_i-v_j$, for $1\leq i\ne j\leq n$, is an MLS of order $n$ with no independent transversal of size $n$.
\end{thm}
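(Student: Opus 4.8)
The plan is to verify two things: first, that $A$ is a genuine MLS, i.e.\ every row and every column is a basis of the vectorial matroid spanned by $v_1,\dots,v_n$; second, that no choice of $n$ entries, one from each row and one from each column, forms an independent set. For the first part, I would fix a row $i$ and show that the $n$ vectors $\{v_1\}\cup\{v_i-v_j : j\ne i\}$ span the same space as $v_1,\dots,v_n$ and hence (being $n$ vectors in an $n$-dimensional space) form a basis. Indeed $v_1$ is present, and for $j\ne i$ one recovers $v_j$ from $v_i$ and $v_i-v_j$; a short induction or a direct linear-algebra argument (write the change-of-basis matrix and check it is invertible) handles this. The column case is symmetric because $a_{ij}$ and $a_{ji}$ differ only by sign, which does not affect spanning.

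For the second, and main, part I would argue by contradiction: suppose $\sigma$ is a permutation of $\{1,\dots,n\}$ such that the entries $a_{i\sigma(i)}$, $i=1,\dots,n$, are linearly independent. The key observation is to sum all these entries. An entry on the diagonal contributes $v_1$; an off-diagonal entry $a_{i\sigma(i)} = v_i - v_{\sigma(i)}$ contributes $v_i - v_{\sigma(i)}$. Summing over all $i$, the off-diagonal contributions telescope around the cycles of $\sigma$: on any cycle $(i_1\,i_2\,\cdots\,i_k)$ with $k\ge 2$, the sum $\sum (v_{i_\ell} - v_{i_{\ell+1}})$ is zero. Hence $\sum_{i=1}^n a_{i\sigma(i)} = f\cdot v_1$, where $f$ is the number of fixed points of $\sigma$ (the number of $i$ with $\sigma(i)=i$).

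Now I split into cases according to $f$. If $f = 0$, then $\sum_i a_{i\sigma(i)} = 0$ is a nontrivial linear dependence (the sum of $n\ge 1$ of the chosen vectors with all coefficients $1$), contradicting independence. If $f \ge 1$, pick one fixed point, say $\sigma(1)=1$, so $a_{11}=v_1$ is among the chosen entries; then $\sum_i a_{i\sigma(i)} = f\cdot v_1 = f\cdot a_{11}$, which for $f\ge 2$ again gives a nontrivial dependence among the chosen vectors (coefficient $1-f\ne 0$ on $a_{11}$ and $1$ on the others). The only remaining possibility is $f=1$: exactly one fixed point. Here the telescoping sum is not by itself contradictory, so I expect this to be the real obstacle, and I would handle it by a more careful argument — for instance, restrict attention to the cycle structure more closely: with $f=1$ and $n\ge 2$ there is at least one nontrivial cycle, say $(i_1\,\cdots\,i_k)$; the $k$ vectors $v_{i_1}-v_{i_2},\dots,v_{i_{k-1}}-v_{i_k},v_{i_k}-v_{i_1}$ already sum to zero, so they are dependent, and since $k\ge 2$ these are $k\ge 2$ distinct chosen entries, giving the contradiction directly without ever invoking the fixed point. (When $n=1$ the claim is vacuous or trivial since a single $v_1$ is independent but there is nothing to contradict; one notes separately that for $n=1$ the "no transversal of size $n$" claim fails, so implicitly $n\ge 2$, which should be stated.)

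The cleanest packaging, which I would adopt in the write-up, is thus: among the $n$ chosen entries, if any nontrivial cycle of $\sigma$ has length $\ge 2$, its entries sum to zero and we are done; otherwise $\sigma$ is the identity, all chosen entries equal $v_1$, and for $n\ge 2$ that is already a dependence. Either way no independent transversal of size $n$ exists. The routine linear algebra is the MLS verification; the one subtlety to be careful about is making sure the telescoping identity on a cycle is stated with the correct orientation (so that consecutive terms cancel), and that the vectors picked out in each case are genuinely distinct entries of $A$ so that the dependence is among the transversal, not merely among abstract vectors.
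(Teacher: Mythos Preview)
Your argument is correct, and your final packaging is actually cleaner than the paper's. Both proofs rest on the same telescoping observation, but you apply it locally to a single cycle of $\sigma$ of length $\ge 2$, which immediately gives a dependence among the entries on that cycle; the only remaining case is $\sigma=\mathrm{id}$, where the $n$ entries are all $v_1$ and hence dependent for $n\ge 2$. The paper instead sums all $n$ entries globally and then case-splits: if $T$ misses the diagonal the total sum is $0$; if $T$ meets the diagonal at least twice, two entries equal $v_1$; and if $T$ meets the diagonal exactly once at position $(i,i)$, the paper further splits on $i=1$ (where the remaining $n-1$ entries sum to $0$) versus $i>1$ (where a separate argument shows $v_i\notin\mathrm{span}(T)$, so $T$ cannot be a basis). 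Your single-cycle trick subsumes both subcases of the ``one fixed point'' situation uniformly, so you avoid the ad hoc $i=1$ versus $i>1$ distinction and the auxiliary span argument. Your observation that the statement tacitly requires $n\ge 2$ (since for $n=1$ the lone entry $v_1$ is a perfectly good independent transversal) is also well taken; the paper does not flag this. One small wrinkle in your MLS verification: when you write ``recover $v_j$ from $v_i$ and $v_i-v_j$'' for row $i$, note that $v_i$ is not itself an entry of that row when $i\ne 1$; you first obtain $v_i=a_{ii}+a_{i1}=v_1+(v_i-v_1)$, and then proceed. This is routine, as you say.
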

\begin{proof}
We leave it to the reader to check that the rows and columns of $A$ are independent. Let $T$ be a transversal of size $n$ in $A$. We show that $T$ is not independent. If $T$ does not contain elements of the main diagonal of $A$, then, since each row and column is represented exactly once among the elements of $T$, the sum of the elements of $T$ is 0, and $T$ is not independent. Thus we may assume that $T$ meets the main diagonal exactly once. Let $a_{ii}=v_1\in T$. If $i=1$ then the sum of the elements of $T-a_{11}$ is 0. If $i>1$, then $v_i$ is not spanned by $T$, so $T$ is not a basis, and thus, is not independent.
\end{proof}
\bibliographystyle{amsplain}
\bibliography{kzrefs5}
\end{document}